\newtheorem{theorem}{Theorem}[section]
\newtheorem{lemma}[theorem]{Lemma}
\theoremstyle{definition}
\newtheorem{definition}[theorem]{Definition}
\theoremstyle{remark}
\newtheorem{remark}[theorem]{Remark}
\numberwithin{equation}{section}
\newcommand{\sm}{\setminus}
\renewcommand{\phi}{\varphi}
\renewcommand{\l}{\ell}
\title{VC-dimension of hyperplanes over finite fields}
\author[\scalebox{0.67}{Ascoli, Betti, Cheigh, Iosevich,  Jeong, Liu, McDonald, Milgrim, Miller, Romero Acosta, Velazquez Iannuzzelli}]{Ruben Ascoli}
\author[]{Livia Betti}
\author[]{Justin Cheigh}
\author[]{Alex Iosevich}
\author[]{Ryan Jeong}
\author[]{Xuyan Liu}
\author[]{Brian McDonald}
\author[]{Wyatt Milgrim}
\author[]{Steven J. Miller}
\author[]{Francisco Romero Acosta}
\author[]{Santiago Velazquez Iannuzzelli}
\begin{document}


\begin{abstract}
Let $\mathbb{F}_q^d$ be the $d$-dimensional vector space over the finite field with $q$ elements.  For a subset $E\subseteq \mathbb{F}_q^d$ and a fixed nonzero $t\in \mathbb{F}_q$, let $\mathcal{H}_t(E)=\{h_y: y\in E\}$, where $h_y$ is the indicator function of the set $\{x\in E: x\cdot y=t\}$.  Two of the authors, with Maxwell Sun, showed in the case $d=3$ that if $|E|\geq Cq^{\frac{11}{4}}$ and $q$ is sufficiently large, then the VC-dimension of $\mathcal{H}_t(E)$ is 3.  In this paper, we generalize the result to arbitrary dimension and improve the exponent in the case $d=3$.

\end{abstract}

\maketitle

\section{Introduction}

Vapnik and Chervonenkis \cite{VC71} introducted the VC-dimension in 1971 in the context of learning theory.  For an introduction to the subject, see for example \cite{DS14}.  Given a domain $X$ and a collection $\mathcal{H}$ of functions $h:X\to \{0,1\}$, consider the learning task of trying to identify an unknown element $f\in \mathcal{H}$ by sampling finitely many points $x_1,...,x_m\in X$ from an unknown probability distribution $D$, and recording the values $f(x_1),...,f(x_m)$.  One desires an algorithm which takes this input and produces a hypothesis $h\in H$ which with high probability has small error with respect to $f$.  To make this precise, we introduce some definitions.

\begin{definition}
Given a set $X$, a probability distribution $D$, and a labeling function $f:X\to \{0,1\}$, let $h$ be a hypothesis, i.e., $h:X\to \{0,1\}$, and define
$$
L_{D,f}(h)=\mathbb{P}_{x\sim D}[h(x)\neq f(x)],
$$
where $\mathbb{P}_{x\sim D}$ means that $x$ is being sampled according to the probability distribution $D$.
\end{definition}

\begin{definition}
A hypothesis class $\mathcal{H}$ is PAC (probably approximately correct) learnable if there exists a function 
$$
m_{\mathcal{H}}:(0,1)^2\to \mathbb{N}
$$
and a learning algorithm with the following property:  For every $\epsilon,\delta\in (0,1)$, for every distribution $D$ over $X$, and for every labeling function $f:X\to \{0,1\}$, if there is some hypothesis $h\in \mathcal{H}$ such that $L_{D,f}(h)=0$, then when running the learning algorithm on $m\geq m_{\mathcal{H}}(\epsilon,\delta)$ i.i.d. examples generated by $D$, and labeled by $f$, the algorithm returns a hypothesis $h$ such that, with probability at least $1-\delta$ (over the choice of $(x_1,...,x_m)\sim D^m$),
$$
L_{D,f}(h)\leq \epsilon.
$$
\end{definition}
The VC-dimension characterizes PAC learnability, in light of the fundamental theorem of statistical learning; $\mathcal{H}$ is PAC learnable if and only if the VC-dimension is finite.  Moreover, there are quantitative bounds for $m_{\mathcal{H}}(\epsilon,\delta)$ based on VCdim$(\mathcal{H})$, with smaller VC-dimension allowing smaller effective sample sizes.  In order to define the VC-dimension, we must first define shattering.

\begin{definition} \label{shatteringdef} Let $X$ be a set and ${\mathcal H}$ a collection of functions from $X$ to $\{0,1\}$. We say that ${\mathcal H}$ shatters a finite set $C \subset X$ if the restriction of ${\mathcal H}$ to $C$ yields every possible function from $C$ to $\{0,1\}$. \end{definition} 

\vskip.125in 

\begin{definition} \label{vcdimdef} Let $X$ and ${\mathcal H}$ be as above. We say that a non-negative integer $d$ is the VC-dimension of ${\mathcal H}$ if there exists a set $C \subset X$ of size $n$ that is shattered by ${\mathcal H}$, and no subset of $X$ of size $n+1$ is shattered by ${\mathcal H}$. \end{definition} 

\vskip.125in

Recent work has connected the VC-dimension to point configuration problems over finite fields.  For $x\in \mathbb{F}_q^d$, let
$$
||x||=x_1^2+\cdots +x_d^2.
$$
For a subset $E\subseteq \mathbb{F}_q^d$, and a fixed nonzero $t\in \mathbb{F}_q$, let
$$
\mathcal{H}_t^{dist}(E):=\{f_y: y\in E\},
$$
where $f_y(x)=1$ if and only if $||x-y||=t$.  Fitzpatrick, Iosevich, McDonald, and Wyman \cite{IMW23} showed in the case $d=2$ that if $|E|\geq Cq^{\frac{15}{8}}$, $q$ sufficiently large, then VCdim$(\mathcal{H})=3$.  The exponent $\frac{15}{8}$ was recently improved to $\frac{13}{7}$ by Thang Pham \cite{P23}, refining the method of \cite{IMW23}. 
 In the case when $E=\mathbb{F}_q^2$ this is trivial, and one may see by induction that in general
$$
\text{VCdim}(\mathcal{H}_t^{dist}(\mathbb{F}_q^d))=d+1.  
$$
In dimensions $d\geq 3$, it is still an open problem whether one can find a threshold $\alpha\in (0,d)$ so that whenever $E\geq C_dq^{\alpha}$ for some constant $C_d$ independent of $q$,  
$$
\text{VCdim}(\mathcal{H}_t^{dist}(E))=d+1.
$$
The best partial result in arbitrary dimension is a corollary of the main theorem from the previous result by the authors of this paper \cite{SMALL22}.  In the previous paper, we considered a related hypothesis class with two parameters.  Let
$$
\mathcal{H}_t^{\ast}(E)\coloneqq\{h_{u,v}: u,v\in E\},
$$
where $h_{u,v}(x)=1$ if and only if $||x-u||=||x-v||=t$.  In \cite{SMALL22}, we showed that whenever
$$
|E|\geq \left\{\begin{array}{ll}
Cq^{\frac{7}{4}} & d=2 \\
Cq^{\frac{7}{3}} & d=3 \\
Cq^{d-\frac{1}{d-1}} & d\geq 4
\end{array}\right.
$$
and $q$ is sufficiently large, the VC-dimension of $\mathcal{H}_t^{\ast}(E)$ is equal to $d$.  As we explain in section 5 of that paper, it follows that with the same restriction on the size of $E\subseteq \mathbb{F}_q^d$, the VC-dimension of $\mathcal{H}_t^{dist}(E)$ is either $d$ or $d+1$.  
\\
\\
These hypothesis classes are closely related to the setting for our main result in this paper.  For a subset $E\subseteq \mathbb{F}_q^d$, and fixed nonzero $t\in \mathbb{F}_q^d$, consider the hypothesis class
$$
\mathcal{H}_t(E):=\{h_y: y\in E\},
$$
where $h_y(x)=1$ if and only if $x\cdot y=t$.  Iosevich, McDonald, and Sun \cite{IMS23} studied this in the case $d=3$, and showed that when $|E|\geq Cq^{\frac{11}{4}}$, the VC-dimension of $\mathcal{H}_t(E)$ is 3.  Note that the VC-dimension of $\mathcal{H}_t(\mathbb{F}_q^d)$ is $d$ and not $d+1$, since a hyperplane in $\mathbb{F}_q^d$ is determined by $d$ points, whereas a sphere in $\mathbb{F}_q^d$ is determined by $d+1$ points.  This result required a different approach from that of \cite{IMW23}, since the latter used the fact that the property $||x-y||=t$ is translation invariant, whereas the property $x\cdot y=t$ is not.  
\\
\\
In this paper, we prove the following theorem which generalizes \cite{IMS23} to arbitrary dimension, and improves the exponent from $\frac{11}{4}$ to $\frac{5}{2}$ in the case $d=3$.  
\begin{theorem}\label{main}
For $d\geq 3$, if $|E|\geq C_dq^{d-\frac{1}{d-1}}$ for an appropriate constant $C_d$ depending only on $d$, and for $q$ sufficiently large, then the VC-dimension of $\mathcal{H}_t(E)$ is equal to $d$.
\end{theorem}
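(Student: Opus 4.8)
The plan is to prove the bounds $\mathrm{VCdim}(\mathcal{H}_t(E))\le d$ and $\mathrm{VCdim}(\mathcal{H}_t(E))\ge d$ separately; only the second uses the hypothesis on $|E|$. For the upper bound I would show, by induction on $d$, that no set of $d+1$ points of $\mathbb{F}_q^d$ is shattered by $\{h_y:y\in\mathbb{F}_q^d\}$, which suffices because $\mathcal{H}_t(E)$ is a subcollection. The case $d=1$ is immediate, since $x_1y=x_2y=t\neq 0$ forces $y\neq0$ and hence $x_1=x_2$, so two distinct points cannot both receive the label $1$. For the inductive step, suppose $\{x_1,\dots,x_{d+1}\}$ is shattered; fix a $d$-element subset $T$ with excluded index $k$. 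The witnesses for the all-ones labeling and for the labeling that is $1$ on $T$ and $0$ on $x_k$ both satisfy $x_i\cdot y=t$ for $i\in T$, so if the $\{x_i:i\in T\}$ were linearly independent these witnesses would coincide (the map $y\mapsto(x_i\cdot y)_{i\in T}$ being a bijection), which is impossible as they disagree on $x_k$. Hence every $d$-subset is linearly dependent, so the $x_i$ lie in a subspace $U$ of dimension $m\le d-1$; since $t\neq0$ and the dot product is nondegenerate, restriction to $U$ realizes $\{h_y|_U:y\in\mathbb{F}_q^d\}$ as all of $\{x\mapsto[\ell(x)=t]:\ell\in U^\ast\}$, so after identifying $U\cong\mathbb{F}_q^m$ (with $U^\ast$ via the dual basis, making the pairing the standard dot product) the set $\{x_1,\dots,x_{d+1}\}$ would be shattered by $\mathcal{H}_t(\mathbb{F}_q^m)$ with $m\le d-1$, contradicting the inductive hypothesis.

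For the lower bound I must produce a shattered $d$-subset inside $E$, and here I would first reduce shattering to incidence statements. If $x_1,\dots,x_d\in E$ are linearly independent, then for $S\subseteq[d]$ the set $V_S:=\{y:x_i\cdot y=t\text{ for }i\in S\}$ is an affine subspace of dimension $d-|S|$, and the labeling $S$ is realized by some $y\in E$ exactly when $(E\cap V_S)\setminus\bigcup_{j\notin S}V_{S\cup\{j\}}\neq\emptyset$; since each $V_{S\cup\{j\}}$ has dimension $d-|S|-1$, this holds once $|E\cap V_S|>\sum_{j\notin S}|E\cap V_{S\cup\{j\}}|$. So it suffices to find linearly independent $x_1,\dots,x_d\in E$ for which the unique solution $y^\ast$ of $x_i\cdot y=t$ $(i\in[d])$ lies in $E$, and for which $|E\cap V_S|$ is comparable to $|E|\,q^{-|S|}$ for every proper $S\subsetneq[d]$ (in particular larger than the $d-|S|$ terms subtracted above, which holds for $q$ large).

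The condition $y^\ast\in E$ is the binding one, since it pins down a single point, and I would arrange it by choosing $y^\ast$ first. Writing $H_y=\{x:x\cdot y=t\}$, the Gauss-sum incidence estimate $\#\{(x,y)\in E^2:x\cdot y=t\}=|E|^2/q+O(q^{(d-1)/2}|E|)$, together with a companion Fourier bound on $\sum_y|E\cap H_y|^2$, shows that for most $y^\ast\in E$ the slice $E\cap H_{y^\ast}$ has size comparable to $|E|/q$; since $|E|\ge C_dq^{d-1/(d-1)}$ makes this exceed $q^{d-2}$, such a slice is not contained in any proper subspace and therefore contains linearly independent $x_1,\dots,x_d$. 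Any such configuration $(y^\ast;x_1,\dots,x_d)$ realizes the labeling $[d]$ with witness $y^\ast\in E$, and a count of the same incidences shows there are $\gtrsim|E|^{d+1}q^{-d}$ of them.

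It remains to show that all but a fraction $\ll 2^{-d}$ of these configurations also satisfy $|E\cap V_S|\asymp|E|q^{-|S|}$ for every proper $S$; summing the failures over the $2^d$ subsets $S$ then leaves a nonempty set of fully shattered configurations. I would establish this with a second-moment argument: regarding $\nu_S:=|E\cap V_S|$ as a function of the random configuration, one has $\mathbb{E}[\nu_S]\asymp|E|q^{-|S|}$, and $\mathrm{Var}(\nu_S)$ is controlled by the error terms in the incidence counts for intersections of two or more of the hyperplanes $H_{x_i}$ — bounds of the shape $\#\{(x,y,y')\in E^3:x\cdot y=x\cdot y'=t\}=|E|^3q^{-2}+(\text{error})$ and their higher analogues, all obtained by iterating the Fourier expansion of the condition $x\cdot y=t$. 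One needs $\mathrm{Var}(\nu_S)/\mathbb{E}[\nu_S]^2\ll q^{-1/(d-1)}$ so that Chebyshev's inequality does not exhaust the configurations from the previous step, and balancing these errors against $|E|$ is exactly what forces the exponent $d-\tfrac1{d-1}$, the $d-1$ reflecting the number of nested hyperplane conditions needed to locate the single point $y^\ast$. The main obstacle is precisely this last step: obtaining incidence and second-moment estimates for intersections of several copies of the variety $x\cdot y=t$ that are simultaneously sharp and uniform over the relevant families of affine subspaces — in particular handling $E$ that is somewhat concentrated on proper subspaces, where even the trivial bound $|E\cap(\text{hyperplane})|\le q^{d-1}$ is, at the threshold, only barely too weak.
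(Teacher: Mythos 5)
Your upper bound ($\mathrm{VCdim}\leq d$ by induction: a linearly independent $d$-subset forces the witnesses of two labelings agreeing on it to coincide, so all $d$-subsets are dependent and the points drop into a lower-dimensional $U$ where the restricted class is again a full hyperplane class) is correct, and is in fact more detail than the paper supplies, since the paper only remarks that $\mathcal{H}_t(\mathbb{F}_q^d)$ has VC-dimension $d$. Your reduction of the lower bound to finding a star $(y^{\ast},x_1,\dots,x_d)$ with $y^{\ast}\in E$, the $x_i$ linearly independent, and $(E\cap V_S)\setminus\bigcup_{j\notin S}V_{S\cup\{j\}}\neq\emptyset$ for every $S$ also matches the paper exactly: its ``bad set of size $k$'' is precisely your $S$ with $|S|=k$ failing this condition, and your star count $\gtrsim |E|^{d+1}q^{-d}$ via the Covert--Iosevich--Koh--Rudnev incidence bound is Lemma~\ref{star counting} combined with Lemma~\ref{independent} (and it is the linear-independence reduction in Lemma~\ref{independent}, not the bad-set step, that forces the exponent $d-\frac{1}{d-1}$; the bad-set step only needs $|E|\gg q^{d-\frac12}$).

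The gap is the final step, and you have correctly identified it yourself. You propose to guarantee the non-covering condition by proving two-sided concentration $|E\cap V_S|\asymp |E|q^{-|S|}$ for most configurations via Chebyshev, but the required variance estimates are not supplied, and this route demands more than is needed: for $|S|=d-1$ you would need a \emph{lower} bound on $|E\cap V_S|$ for most lines $V_S$, and these lines are not uniformly distributed --- they all pass through $y^{\ast}$ and are parametrized by $(d-1)$-tuples from $E\cap H_{y^{\ast}}$ --- so the standard ``pairs of points determine a line'' second-moment identity does not directly apply, and for intermediate $|S|$ one needs higher correlations over families of affine subspaces tied to the star structure. The paper avoids concentration entirely and bounds the number of bad configurations directly. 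The key idea you are missing is Lemma~\ref{sizelemma}: if $\mathcal{Q}(B)=E\cap V_S$ has more than $q^{\ell-1}$ points, it contains an $\ell$-element subset $J$ with $\{y\}\cup J$ linearly independent; if $S$ is bad, the hyperplanes $\{z:x_j\cdot z=t\}$, $j\notin S$, must cover $J$, so each remaining leaf $x_j$ lies on $H_y$ and on $H_z$ for every $z$ in its share $Z_j$ of $J$, leaving at most $q^{d-1-|Z_j|}$ choices for it; multiplying out and summing over the (boundedly many) covers of $J$ gives at most $C_d'|E|^kq^{d^2-kd-d+k}$ bad stars (Lemma~\ref{bad}), which is $o(|E|^{d+1}q^{-d})$ once $|E|\gg q^{d-\frac12}$. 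Replacing your second-moment plan with this covering argument completes your outline; as written, the proposal does not constitute a proof.
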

We will prove this result with techniques similar to \cite{SMALL22}, which will be strong enough to compute the VC-dimension of a large subset $E\subseteq \mathbb{F}_q^d$ in arbitrary dimension, in contrast to the situation for distances.
\\
\\
The results discussed above can be expressed in terms of graph embeddings $\phi:G\hookrightarrow \mathcal{G}_t(E)$ for appropriate graphs $G$, where $\mathcal{G}_t(E)$ is the distance (resp. dot product) graph, i.e., the vertices are points in $E$, with an edge $x\sim y$ whenever $||x-y||=t$ (resp. $x\cdot y=t$). For relevant results on graph embeddings in the distance and dot product graphs, see for example \cite{BCCHIP16, CIKR10, IJM21, IMS23, IMW23, IP19, IR07}.

\section{Proof of main theorem}

Consider a large subset $E\subseteq \mathbb{F}_q^d$, and a fixed nonzero $t\in \mathbb{F}_q$.  We will use theorem 2.1 from \cite{CIKR10}, which counts pairs $(x,y)\in E^2$ with $x\cdot y=t$.  
\begin{theorem}[\hspace{1sp}\cite{CIKR10}]\label{edge}
For non-negative functions $f,g:\mathbb{F}_q^d\to \mathbb{R}$, 
$$
\sum_{x\cdot y=t}f(x)g(y)=q^{-1}||f||_{L^1}||g||_{L^1}+R(t),
$$
where
$$
|R(t)|\leq ||f||_{L^2}||g||_{L^2}q^{\frac{d-1}{2}}.
$$
\end{theorem}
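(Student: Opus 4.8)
The plan is a short operator-norm argument: split off the main term as a rank-one contribution and estimate the remainder by Cauchy--Schwarz with a sharp $\ell^2$-bound on the ``sphere'' kernel. Since $t\neq 0$ and $f,g\geq 0$, write $\mathbf 1_{\{x\cdot y=t\}}=q^{-1}+K(x,y)$ with $K(x,y):=\mathbf 1_{\{x\cdot y=t\}}-q^{-1}$. Then
$$\sum_{x\cdot y=t}f(x)g(y)=q^{-1}\left(\sum_x f(x)\right)\left(\sum_y g(y)\right)+\sum_{x,y}f(x)g(y)K(x,y)=q^{-1}\|f\|_{L^1}\|g\|_{L^1}+R(t),$$
so $R(t)=\langle f,Kg\rangle$, where $Kg(x)=\sum_y K(x,y)g(y)$ and $\langle\cdot,\cdot\rangle$ is the $\ell^2(\mathbb{F}_q^d)$ pairing. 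By Cauchy--Schwarz $|R(t)|\leq\|f\|_{L^2}\,\|Kg\|_{L^2}$, so everything reduces to proving $\|Kg\|_{L^2}\leq q^{(d-1)/2}\|g\|_{L^2}$, i.e.\ bounding the $\ell^2$ operator norm of $K$ by $q^{(d-1)/2}$.

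The key step is to compute the Gram matrix $N(y,y'):=\sum_x K(x,y)K(x,y')$. Expanding the product and using that $\#\{x:x\cdot y=t\}=q^{d-1}$ for $y\neq 0$ (and $0$ for $y=0$, since $t\neq 0$), together with $\#\{x:x\cdot y=x\cdot y'=t\}=q^{d-2}$ when $y,y'$ are linearly independent, I expect to obtain: $N(y,y')=0$ whenever $y,y'$ are linearly independent; $N(y,y)=q^{d-1}-q^{d-2}$ for $y\neq 0$; $N(y,y')=-q^{d-2}$ when $y'=cy$ with $c\neq 0,1$; and $N(0,0)=q^{d-2}$ while $N(0,y')=0$ for $y'\neq 0$. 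Thus $N$ is block-diagonal, with one block supported on each punctured line through the origin together with the $1\times 1$ block at $0$, and
$$\|Kg\|_{L^2}^2=\sum_{y,y'}g(y)g(y')N(y,y')=q^{d-2}g(0)^2+\sum_{\ell}\left(q^{d-1}\sum_{y\in\ell\setminus\{0\}}g(y)^2-q^{d-2}\left(\sum_{y\in\ell\setminus\{0\}}g(y)\right)^{2}\right),$$
the sum running over lines $\ell$ through the origin (each $y\neq 0$ lying on exactly one). Dropping the nonpositive squared terms and using $q^{d-2}\leq q^{d-1}$ gives $\|Kg\|_{L^2}^2\leq q^{d-1}\sum_y g(y)^2=q^{d-1}\|g\|_{L^2}^2$; combined with the Cauchy--Schwarz step, this is exactly $|R(t)|\leq q^{(d-1)/2}\|f\|_{L^2}\|g\|_{L^2}$.

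The one substantive point is the vanishing $N(y,y')=0$ for linearly independent $y,y'$: this orthogonality is what upgrades the naive estimate $q^{d/2}$ (all one gets from $\|K\|_{HS}$, or from a crude character-sum bound on $R(t)$) to the sharp exponent $q^{(d-1)/2}$. Everything else is routine bookkeeping of the degenerate pairs ($y=0$, or $y'$ a nonzero scalar multiple of $y$), each of which contributes either a decoupled term or a manifestly nonpositive one and so cannot hurt the bound.
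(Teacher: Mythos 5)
Your argument is correct, and I verified the key computations: with $K(x,y)=\mathbf 1_{\{x\cdot y=t\}}-q^{-1}$ one indeed gets $\sum_x K(x,y)K(x,y')=q^{d-2}-q^{d-2}-q^{d-2}+q^{d-2}=0$ for linearly independent $y,y'$, the values $q^{d-1}-q^{d-2}$ on the diagonal, $-q^{d-2}$ for distinct nonzero collinear pairs (using $ct\neq t$ for $c\neq 1$, which is where $t\neq 0$ enters), and the stated degenerate values at $y=0$; the per-line identity $q^{d-1}\sum g(y)^2-q^{d-2}(\sum g(y))^2$ and the final bound $\|Kg\|_{L^2}^2\leq q^{d-1}\|g\|_{L^2}^2$ then follow exactly as you say. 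Note, though, that the paper does not prove this statement at all: it imports it as Theorem 2.1 of \cite{CIKR10}, where the proof runs through discrete Fourier analysis --- expanding $\mathbf 1_{\{x\cdot y=t\}}$ in additive characters, isolating the $s=0$ frequency as the main term, and controlling the rest via Plancherel and orthogonality of characters. Your route is genuinely different: a character-free second-moment argument in which the sharp exponent $q^{(d-1)/2}$ (rather than the $q^{d/2}$ one gets from a crude Hilbert--Schmidt or naive character-sum bound) comes from the exact vanishing of the Gram entries for linearly independent normals, i.e.\ from the block-diagonal structure along lines through the origin. What your approach buys is a self-contained, elementary proof using only the counts $\#\{x:x\cdot y=t\}=q^{d-1}$ and $\#\{x:x\cdot y=x\cdot y'=t\}=q^{d-2}$; what the Fourier approach buys is generality, since the same machinery in \cite{CIKR10} handles more general homogeneous forms and related incidence bounds. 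One cosmetic remark: the hypothesis $t\neq 0$ is essential to your Gram computation (and to the theorem), so it is worth stating it explicitly rather than inheriting it silently from the surrounding context; likewise the phrase ``I expect to obtain'' should be replaced by the actual two-line verification, which is immediate.
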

In particular, when $f,g$ are both chosen to be the indicator function of $E$, we see that
$$
|\{(x,y)\in E^2: x\cdot y =t\}|=\frac{|E|^2}{q}+O\left(q^{\frac{d-1}{2}}|E|\right),
$$
and the error term is much smaller than the main term when $|E|=\omega\left(q^{\frac{d+1}{2}}\right)$.  We use this fact, along with Holder's inequality, to count the number of $k$-stars in the dot-product graph on $E$.  
\begin{definition}\label{star}
A (k+1)-tuple $(y,x_1,...,x_k)$ of points in $\mathbb{F}_q^d$ is a $k$-star if $y\cdot x_i=t$ for each $i=1,...,k$.  If all the $x_i$ are distinct, we say $(y,x_1,...,x_k)$ is a non-degenerate $k$-star.
\end{definition}

\begin{figure}
\centering
\includegraphics[width=0.6\textwidth]{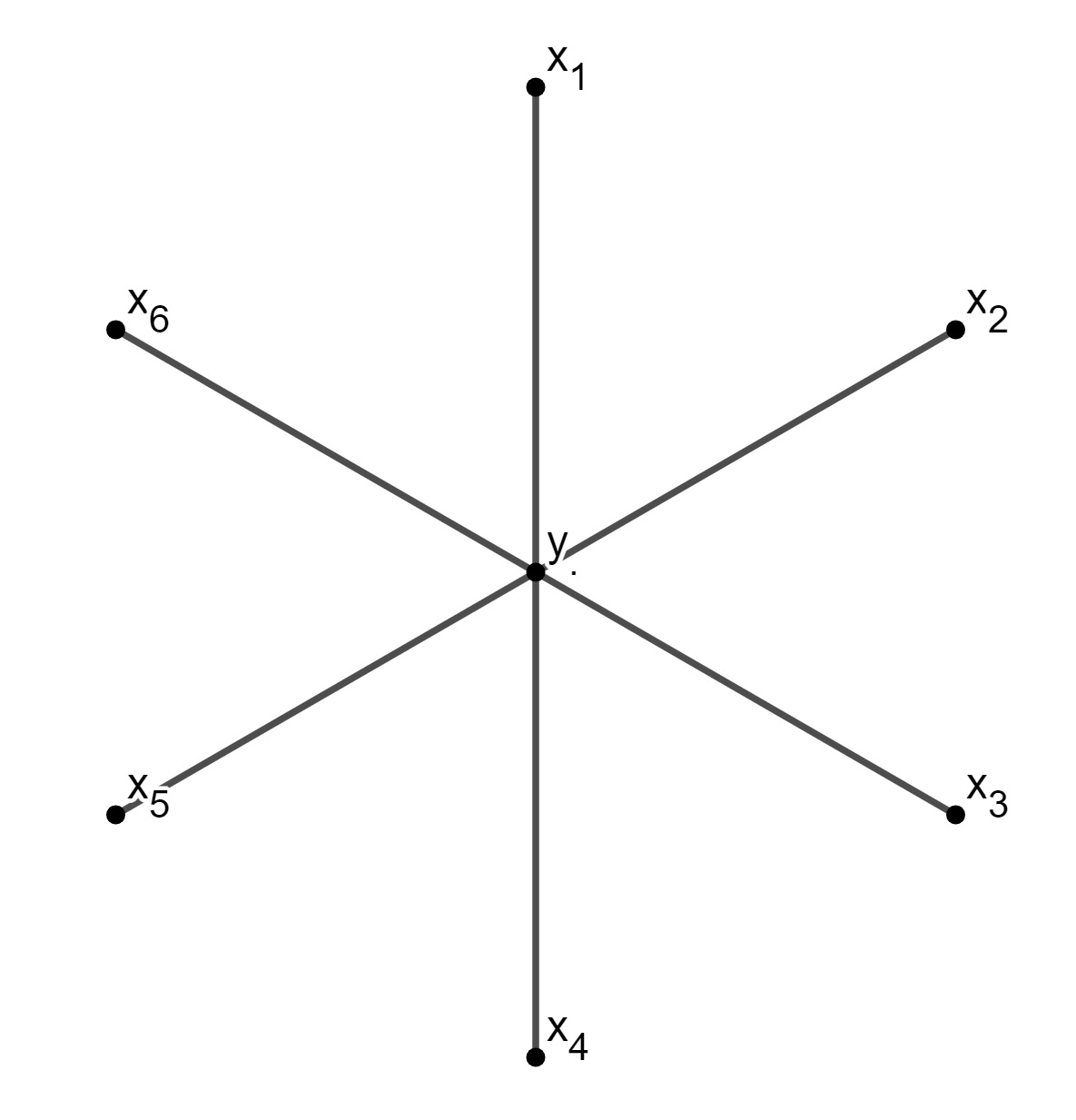}
\caption{A 6-star realized as a subgraph of the dot product graph $\mathcal{G}_t(E)$.}
\label{Star}
\end{figure}

\begin{definition}
Let $\mathcal{G}_t(E)$ be the dot product $t$ graph on $E$, i.e., the graph with vertex set $E$ and an edge $x\sim y$ whenever $x\cdot y=t$.
\end{definition}
\begin{lemma}\label{star counting}
Let 
$$
N_k(E):=|\{(y,x_1,...,x_k)\in E^{k+1}: x_i \ \mathrm{distinct}, \ y\cdot x_i=t \ \forall i\}|
$$
be the number of non-degenerate $k$-stars in $\mathcal{G}_t(E)$.  If $|E|\geq C_kq^{\frac{d+1}{2}}$ for an appropriate constant $C_k$ depending only on $k$, then
$$
N_k(E)\geq \frac{|E|^{k+1}}{2q^k}.
$$
\end{lemma}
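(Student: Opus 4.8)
The plan is to pass to the degree sequence of the dot product graph and reduce everything to the edge count of Theorem~\ref{edge}. For $y\in E$ let $d_y=|\{x\in E:x\cdot y=t\}|$ be the degree of $y$ in $\mathcal{G}_t(E)$, and for an integer $j\ge 0$ put $M_j=\sum_{y\in E}d_y^{\,j}$. Then $M_0=|E|$, the quantity $M_1=|\{(x,y)\in E^2:x\cdot y=t\}|$ is exactly what Theorem~\ref{edge} controls, and $M_k=\sum_y d_y^{\,k}$ is the number of all $k$-stars, degenerate ones included. The non-degenerate count is the falling-factorial sum $N_k(E)=\sum_{y\in E}d_y(d_y-1)\cdots(d_y-k+1)$, and the elementary inequality $\prod_{i=1}^{k-1}(1-i/d_y)\ge 1-\binom{k}{2}/d_y$ (which also holds, for trivial reasons, when $d_y<k$) yields $N_k(E)\ge M_k-\binom{k}{2}M_{k-1}$. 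So it suffices to show that $M_k$ has size about $|E|^{k+1}/q^k$ while $\binom{k}{2}M_{k-1}$ is a small fraction of it.

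For the lower bound on $M_k$ I would use Hölder in the form of the power mean inequality, $M_k\ge M_1^{\,k}/|E|^{k-1}$, together with Theorem~\ref{edge} applied to $f=g=\mathbf{1}_E$: since $\|\mathbf{1}_E\|_{L^1}=|E|$ and $\|\mathbf{1}_E\|_{L^2}=|E|^{1/2}$, we get $M_1\ge |E|^2/q-|E|q^{(d-1)/2}\ge(1-C_k^{-1})|E|^2/q$ once $|E|\ge C_kq^{(d+1)/2}$, hence
$$
M_k\ \ge\ (1-C_k^{-1})^k\,\frac{|E|^{k+1}}{q^k}.
$$
For the degenerate term, Hölder in the other direction gives $M_{k-1}=\sum_y d_y^{\,k-1}\cdot 1\le M_k^{(k-1)/k}|E|^{1/k}$, so $\binom{k}{2}M_{k-1}\le \binom{k}{2}(|E|/M_k)^{1/k}M_k$. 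The key point is that $M_k$ dwarfs $|E|$: combining the display with $|E|/q\ge C_kq^{(d-1)/2}\ge C_k$ (here $d\ge 3$ is used) gives $M_k/|E|\ge\bigl((C_k-1)q^{(d-1)/2}\bigr)^k$, whence $\binom{k}{2}(|E|/M_k)^{1/k}\le \binom{k}{2}/(C_k-1)\to 0$ as $C_k\to\infty$.

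Combining the two estimates, $N_k(E)\ge M_k\bigl(1-\binom{k}{2}/(C_k-1)\bigr)\ge (1-C_k^{-1})^k\bigl(1-\binom{k}{2}/(C_k-1)\bigr)|E|^{k+1}/q^k$, and choosing $C_k$ large in terms of $k$ makes the prefactor at least $\tfrac12$. I expect the delicate point to be precisely the control of the degenerate stars: the naive estimates $d_y\le|E|$ or $d_y\le q^{d-1}$ plugged into $M_{k-1}$ are too wasteful when $k$ is comparable to $d$ (they would force $|E|\gtrsim q^{d-1}$ instead of $q^{(d+1)/2}$), so one must bound $M_{k-1}$ against $M_k$ rather than absolutely, exploiting that $M_k/|E|$ is a large power of $q$. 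Everything else is bookkeeping with the constant $C_k$ and the error term in Theorem~\ref{edge}.
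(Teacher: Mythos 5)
Your proposal is correct and follows essentially the same route as the paper: both reduce $N_k(E)$ to the edge count $M_1=|\{(x,y)\in E^2:x\cdot y=t\}|$ supplied by Theorem~\ref{edge} via the power-mean/H\"older inequality $M_k\ge M_1^k/|E|^{k-1}$, with the hypothesis $|E|\ge C_kq^{\frac{d+1}{2}}$ entering exactly to make the error term in Theorem~\ref{edge} a small fraction of $|E|^2/q$. The only divergence is in how non-degeneracy is handled --- the paper truncates the degree to $\max(\psi(x)-k+1,0)$ \emph{before} applying H\"older, whereas you expand the falling factorial via the Weierstrass product inequality to get $N_k\ge M_k-\binom{k}{2}M_{k-1}$ and then absorb the correction with a second application of H\"older using $M_k\gg|E|$ --- but both devices cost only a constant and yield the same conclusion.
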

\begin{proof}
For $x\in E$, let 
$$
\psi(x)=\sum_{\substack{y\in E \\ x\cdot y =t}}1
$$ 
be the number of neighbors of $x$ in $\mathcal{G}_t(E)$.  Then
$$
N_k(E):=\sum_{x\in E}\psi(x)(\psi(x)-1)\cdots (\psi(x)-k+1)
$$
$$
\geq \sum_{x\in E}\phi(x)^k,
$$
where $\phi(x)=\max(\psi(x)-k+1,0)$.  By Holder's inequality,
$$
\left(\sum_{x\in E}\phi(x)\right)^k\leq \left(\sum_{x\in E}{\phi(x)^k}\right)\left(\sum_{x\in E}1\right)^{k-1}
\leq |E|^{k-1}N_k(E).
$$
To get the desired lower bound for $N_k(E)$, it suffices to bound $\sum_{x\in E}{\phi(x)}$ from below.  We obtain such a lower bound as a result of Theorem \ref{edge}:
\begin{align*}
\sum_{x\in E}\phi(x)&\geq 
\sum_{x\in E}(\psi(x)-k+1) \\
&=\frac{|E|^2}{q}+O\left(q^{\frac{d-1}{2}}|E|\right)-(k-1)|E|
\geq 2^{-\frac{1}{k}}\frac{|E|^2}{q},
\end{align*}
assuming $|E|\geq C_kq^{\frac{d+1}{2}}$ for an appropriate constant $C_k$ depending only on $k$.  This yields
$$
N_k(E)\geq \frac{|E|^{k+1}}{2q^k}.
$$
\end{proof}
Having obtained a lower bound for the number of $k$-stars in $\mathcal{G}_t(E)$, we are particularly interested in the case $k=d$, and particularly those stars $(y,x_1,...,x_d)$ with the property that $\{x_1,...,x_d\}\subseteq \mathbb{F}_q^d$ is a linearly independent set of vectors.  Therefore, we would like to find an upper bound for the number of $d$-stars $(y,x_1,...,x_d)$ formed from linearly dependent sets $\{x_1,...,x_d\}$.  
\begin{lemma}\label{independent}
Let $\mathcal{N}_d(E)$ be the number of $d$-stars $(y,x_1,...,x_d)$ in $\mathcal{G}_t(E)$ such that $\{x_1,...,x_d\}$ is a linearly independent set.  If 
$$
|E|\geq C_dq^{d-\frac{1}{d-1}},
$$
for $q$ sufficiently large, then 
$$
\mathcal{N}_d(E)\geq \frac{|E|^{d+1}}{3q^d}.
$$
\end{lemma}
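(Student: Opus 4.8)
The plan is to start from the lower bound $N_d(E)\geq \frac{|E|^{d+1}}{2q^d}$ supplied by Lemma \ref{star counting} (which applies since $|E|\geq C_dq^{d-\frac{1}{d-1}}$ dominates $C_dq^{\frac{d+1}{2}}$ for $d\geq 3$ and $q$ large), and subtract off an upper bound for the number of \emph{degenerate} non-degenerate $d$-stars, i.e.\ $d$-stars $(y,x_1,\dots,x_d)$ with the $x_i$ distinct but $\{x_1,\dots,x_d\}$ linearly \emph{dependent}. If I can show this count is at most $\frac{|E|^{d+1}}{6q^d}$, then $\mathcal{N}_d(E)\geq \frac{|E|^{d+1}}{2q^d}-\frac{|E|^{d+1}}{6q^d}=\frac{|E|^{d+1}}{3q^d}$ as desired.

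\textbf{Counting the linearly dependent stars.} Suppose $\{x_1,\dots,x_d\}$ is linearly dependent. Then the vectors span a subspace of dimension at most $d-1$, so they all lie in some hyperplane through the origin; equivalently, there is a nonzero $v\in\mathbb{F}_q^d$ (determined up to scalar) with $x_i\cdot v=0$ for all $i$. The strategy is to first choose $d-1$ of the points, say $x_1,\dots,x_{d-1}\in E$ freely — at most $|E|^{d-1}$ ways — and then bound the number of ways to complete the configuration with $x_d$ and $y$. Given $x_1,\dots,x_{d-1}$, the point $x_d$ must lie in their linear span, a subspace $V$ of dimension $r\leq d-1$; in the generic case $r=d-1$ and $|V\cap E|\leq |V|=q^{d-1}$, but we need something better, and we also still need to place $y$. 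The key observation is that the pair $(x_d,y)$ is constrained by the $d$ dot-product equations $y\cdot x_i=t$. If $x_1,\dots,x_{d-1}$ are themselves linearly independent, the equations $y\cdot x_1=\cdots=y\cdot x_{d-1}=t$ cut the admissible $y$ down to an affine line (a coset of the orthogonal complement), so there are at most $q$ choices for $y$; then $x_d$ must satisfy $x_d\cdot y=t$ and $x_d\in V$, and one argues via Theorem \ref{edge} applied inside the lower-dimensional space, or by a direct incidence bound, that for most choices the number of valid $x_d\in E\cap V$ is controlled. Summing, the dependent-star count should come out as roughly $|E|^{d-1}\cdot q\cdot (\text{few choices for }x_d)$, and the arithmetic needs $|E|^{d-1}q\cdot(\cdots)\leq \frac{|E|^{d+1}}{6q^d}$, i.e.\ the ``few choices for $x_d$'' factor must be $\lesssim |E|^2/q^{d+1}$ on average — which is exactly where the hypothesis $|E|\geq C_dq^{d-\frac{1}{d-1}}$ enters and must be used sharply.

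\textbf{Main obstacle.} The delicate point is handling the point $x_d$: conditioning on $x_1,\dots,x_{d-1}$, we need a genuinely good upper bound on $\sum |E\cap \mathrm{span}(x_1,\dots,x_{d-1})|$ weighted by the number of compatible $y$, and the worst case is when many of the first $d-1$ points already lie in a common low-dimensional subspace, forcing a separate stratification by the rank $r=\dim\mathrm{span}(x_1,\dots,x_{d-1})$. For each rank $r$ one applies Theorem \ref{edge} (or an iterated version of it within an $r$-dimensional subspace, where the error exponent becomes $q^{(r-1)/2}$) to count stars inside that subspace, and then sums over the $O(q^{rd})$ choices of $r$-dimensional subspace and the points within. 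Balancing these stratified contributions against the threshold $q^{d-\frac{1}{d-1}}$ — and checking that the dominant stratum is $r=d-1$ while all lower strata are negligible for $q$ large — is the technical heart of the argument; the rest is bookkeeping with Theorem \ref{edge} and the crude bound $|E\cap H|\leq q^{d-1}$ for a hyperplane $H$.
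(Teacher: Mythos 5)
Your overall decomposition is exactly the paper's: invoke Lemma \ref{star counting} to get $N_d(E)\geq \frac{|E|^{d+1}}{2q^d}$, show the number of $d$-stars with distinct but linearly dependent leaves is at most $\frac{|E|^{d+1}}{6q^d}$, and subtract. But the second step --- the only nontrivial one --- is never actually carried out. You choose $x_1,\dots,x_{d-1}$ freely from $E$ ($|E|^{d-1}$ ways), then $y$, then $x_d$, and you yourself flag the resulting difficulties as ``the technical heart of the argument'': the bound on the number of admissible $x_d$ is left as ``one argues via Theorem \ref{edge} applied inside the lower-dimensional space, or by a direct incidence bound, that \dots the number of valid $x_d$ is controlled,'' and the stratification by $r=\dim\mathrm{Span}(x_1,\dots,x_{d-1})$ with its case analysis is only described, not executed. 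As written this is a program, not a proof; moreover, applying Theorem \ref{edge} inside a proper subspace is not immediate (the bilinear form can restrict degenerately), so that sub-step would itself need justification if you went that route.

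The paper closes this gap with a much simpler ordering that makes the rank stratification unnecessary: fix $y\in E$ \emph{first}, so that each of $x_1,\dots,x_{d-1}$ must be one of the $\psi(y)$ neighbors of $y$, giving at most $\psi(y)^{d-1}$ choices; then $x_d$ lies in $\mathrm{Span}(x_1,\dots,x_{d-1})\cap\{x:x\cdot y=t\}$. Since the span contains $0$ and the affine hyperplane does not (as $t\neq 0$), these are distinct, and their intersection has at most $q^{d-2}$ points --- this is the concrete version of your ``few choices for $x_d$.'' Finally the crude bounds $\psi(y)^{d-1}\leq q^{(d-1)(d-2)}\psi(y)$ and $\sum_{y\in E}\psi(y)\lesssim \frac{|E|^2}{q}$ (Theorem \ref{edge}) give a total of $O\bigl(dq^{d(d-2)}\frac{|E|^2}{q}\bigr)$, which is below $\frac{|E|^{d+1}}{6q^d}$ precisely when $|E|\geq C_dq^{d-\frac{1}{d-1}}$. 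If you want to salvage your ordering, you must (i) prove the $q^{d-2}$ bound for $x_d$ explicitly, (ii) bound the number of rank-$r$ tuples $(x_1,\dots,x_{d-1})$ and the corresponding $q^{d-r}$ choices of $y$ for every $r\leq d-1$, and (iii) verify each stratum is dominated by $r=d-1$; none of that is currently on the page.
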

\begin{proof}
In a star $(y,x_1,...,x_d)$, if $\{x_1,...,x_d\}$ is linearly dependent, we may assume without loss of generality that 
$$
x_d\in \text{Span}(x_1,...,x_{d-1}).
$$
For a given $y\in E$, there are $\psi(y)$ points $x\in E$ such that $x\cdot y=t$.  Therefore, there are at most $\psi(y)^{d-1}$ choices for the first $d-1$ points $x_1,...,x_{d-1}$.  Once $y,x_1,...,x_{d-1}$ are fixed, we see that the point $x_d$ lies on the hyperplane $\{x\in E: x\cdot y=t\}$ as well as the hyperplane $\text{Span}(x_1,...,x_{d-1})$.  These are not the same hyperplane, as only one of them contains the origin since $t\neq 0$.  Moreover, their intersection is nonempty since it contains $x_d$, and so we conclude that $x_d$ must be chosen from a $(d-2)$-dimensional subspace, which must have $q^{d-2}$ points.  Putting this together, we find that the number of stars $(y,x_1,...,x_d)$ in $\mathcal{G}_t(E)$ with the set $\{x_1,...,x_d\}$ being linearly dependent is bounded by
$$
dq^{d-2}\sum_{y\in E}\psi(y)^{d-1} 
\leq dq^{d-2}q^{(d-1)(d-2)}\sum_{y\in E}\psi(y)
$$
$$
\lesssim dq^{d(d-2)}\frac{|E|^2}{q},
$$
since $\phi(y)\leq q^{d-1}$ for any $y$. The factor of $d$ comes from the fact that we chose $x_d\in \text{Span}(x_1,...,x_{d-1})$.  The last line follows from Theorem \ref{edge}. We find that
$$
dq^{d(d-2)}\frac{|E|^2}{q}
< \frac{|E|^{d+1}}{6q^d}
$$
as long as $|E|\geq C_dq^{d-\frac{1}{d-1}}$ for an appropriate constant $C_d$. Finally, Lemma \ref{star counting} finishes the proof of the statement.
\end{proof}
\begin{definition}
For a $d$-star $\mathcal{S}=(y,x_1,...,x_d)$, we call $L=\{x_1,...,x_d\}$ the leaf set.  We say a subset $A=\{x_{n_1},...,x_{n_k}\}$ of the leaf set is bad with respect to $\mathcal S$ if for every $z\in E$ satisfying $z\cdot x_{n_i}=t$ for all $i=1,...,k$, there is some $x\in L\sm A$ with $z\cdot x=t$ as well.  
\end{definition}
\begin{remark}
Our definition of a bad set is designed for testing whether the set $\{x_1,...,x_d\}$ is shattered by $\mathcal{H}_t(E)$.  In particular, it follows immediately from definitions that $\{x_1,...,x_d\}\subseteq E$ is shattered if and only if there is some $y\in E$ so that $\mathcal{S}=(y,x_1,...,x_d)$ is a $d$-star in $\mathcal{G}_t(E)$, and $\{x_1,...,x_d\}$ admits no bad subset of size $k=1,...,d-1$.
\end{remark}
With this in mind, our strategy is to show that a generic $d$-star in $\mathcal{G}_t(E)$ with a linearly independent leaf set admits no bad sets.  To see this, we bound the number of $d$-stars corresponding to a given bad set. 
\begin{definition}
Given a set $B=\{b_1,...,b_k\}$ which is bad in some $d$-star $\mathcal{S}=(y,x_1,...,x_d)$ with linearly independent leaf set $L=\{x_1,...,x_d\}$, let 
$$
\mathcal{Q}(B)\coloneqq\{x\in E: x\cdot b_i=t \ \forall i=1,...,k\}
$$
\end{definition}
If $\mathcal{Q}(B)$ is small, this restricts the number of choices for the point $y$ in a star $\mathcal{S}=(y,x_1,...,x_d)$ containing $B$.  If $\mathcal{Q}(B)$ is large, on the other hand, we will see that this restricts the number of choices for the leaf set.  The following lemma will allow us to separate into cases based on the size of $\mathcal{Q}(B)$.
\begin{lemma}\label{sizelemma}
Suppose that $B=\{b_1,...,b_k\}$ is bad in some star $\mathcal{S}=(y,x_1,...,x_d)$, and that
$$
|\mathcal{Q}(B)|>q^{r-1}.
$$
Then for any $y\in \mathcal{Q}(B)$, there is a subset $J\subseteq \mathcal{Q}(B)$ of size $r$, not containing $y$, so that $\{y\}\cup J$ is linearly independent. 
\end{lemma}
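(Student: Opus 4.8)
The plan is to construct the set $J$ greedily, adjoining one linearly independent vector at a time and controlling the process by a dimension count. The key structural observation is that $\mathcal{Q}(B)$ is contained in the affine subspace $V=\{x\in\mathbb{F}_q^d:\ x\cdot b_i=t \text{ for all } i=1,\dots,k\}$, and since $t\neq 0$ this subspace does not contain the origin (exactly the point used in the proof of Lemma \ref{independent}). Consequently, for any \emph{linear} subspace $W\subseteq\mathbb{F}_q^d$, the intersection $V\cap W$ cannot equal $W$, because $0\in W$ while $0\notin V$; hence $V\cap W$ is either empty or an affine subspace of dimension at most $\dim W-1$, and in either case $|\mathcal{Q}(B)\cap W|\leq |V\cap W|\leq q^{\dim W-1}$.

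Now fix $y\in\mathcal{Q}(B)$ (note $y\neq 0$ since $0\notin V$). I would choose $j_1,\dots,j_r\in\mathcal{Q}(B)$ inductively so that $\{y,j_1,\dots,j_m\}$ is linearly independent for each $m\leq r$. Suppose $j_1,\dots,j_m$ have already been chosen with $m<r$, and set $W_m=\mathrm{Span}(y,j_1,\dots,j_m)$, which has dimension $m+1$ by the inductive hypothesis. By the observation above, $|\mathcal{Q}(B)\cap W_m|\leq q^{m}\leq q^{r-1}<|\mathcal{Q}(B)|$, so there is a point $j_{m+1}\in\mathcal{Q}(B)\setminus W_m$. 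Since $W_m$ contains $y$ and $j_1,\dots,j_m$, this new point differs from all of them and lies outside their span, so $\{y,j_1,\dots,j_{m+1}\}$ is linearly independent. Iterating until $m=r$ yields $J=\{j_1,\dots,j_r\}\subseteq\mathcal{Q}(B)\setminus\{y\}$ with $\{y\}\cup J$ linearly independent, as desired.

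I do not anticipate a serious obstacle; the only points needing care are confirming that $\mathcal{Q}(B)$ genuinely lies in an affine subspace missing the origin (which is where $t\neq 0$ enters), and checking that the greedy construction never stalls before reaching size $r$. The latter is exactly where the hypothesis $|\mathcal{Q}(B)|>q^{r-1}$ is used: it guarantees $|\mathcal{Q}(B)|$ strictly exceeds the bound $q^{\dim W_m-1}=q^m$ for every $m$ in the range $0,1,\dots,r-1$. I would also remark that the assumption that $B$ is bad in $\mathcal{S}$ plays no role in the argument itself; it merely records the situation in which the lemma will later be invoked.
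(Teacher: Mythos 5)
Your proof is correct and rests on exactly the same idea as the paper's: $\mathcal{Q}(B)$ lies in an affine subspace missing the origin (because $t\neq 0$), so its intersection with any linear span has one fewer dimension, and the hypothesis $|\mathcal{Q}(B)|>q^{r-1}$ then forces the independent set to grow to size $r$. The only difference is presentational — you build $J$ greedily by induction, while the paper takes a maximal $J$ and derives the same cardinality bound from maximality — so this is essentially the paper's argument.
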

\begin{proof}
Fix $b\in B$, so that every point $x\in \mathcal{Q}(B)$ lies on the hyperplane $H_b$ defined by $x\cdot b=t$.  Suppose that $J$ is the largest subset of $\mathcal{Q}(B)$, with the desired property that $\{y\}\cup J$ is linearly independent and $J$ does not contain $y$.  For any 
$$
z\in \mathcal{Q}(B)\setminus \text{Span}(\{y\}\cup J),
$$
we see that $\{y,z\}\cup J$ is linearly independent.  Since we assumed that $J$ is maximal, this means that
$$
\mathcal{Q}(B)\setminus \text{Span}(\{y\}\cup J)=\emptyset.
$$
Therefore,
$$
\mathcal{Q}(B)=\mathcal{Q}(B)\cap \text{Span}(\{y\}\cup J)\subseteq H_b\cap \text{Span}(\{y\}\cup J).
$$
Also note that $H_b$ does not contain $\text{Span}(\{y\}\cup J)$ since the former does not contain 0, while the latter does.  Thus, their intersection is an affine subspace of dimension at most $|J|$, having at most $q^{|J|}$ points.  Therefore,
$$
q^{r-1}<|\mathcal{Q}(B)|\leq |H_b\cap \text{Span}(\{y\}\cup J)|\leq q^{|J|},
$$
so $|J|\geq r$.
\end{proof}
\begin{lemma}\label{bad}
For $E\subseteq \mathbb{F}_q^d$, the number of $d$-stars in $E$ with linearly independent leaf set containing a bad set of size $k$ is at most 
$$
C_d'|E|^kq^{d^2-kd-d+k},
$$
for an appropriate constant $C_d'$.
\end{lemma}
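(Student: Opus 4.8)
\emph{Proof sketch / plan.} Write the target exponent as $d^{2}-kd-d+k=(d-1)(d-k)$. The plan is to fix the bad set and reduce everything to one elementary counting estimate. First I would bound the number of $d$-stars with linearly independent leaf set containing a bad set of size $k$ by $\sum_{(b_1,\dots,b_k)\in E^{k}}M(b_1,\dots,b_k)$, where $M(b_1,\dots,b_k)$ is the number of $d$-stars in $\mathcal G_t(E)$ whose leaf set has $\{b_1,\dots,b_k\}$ as a bad subset; each qualifying star is counted at least once after choosing a bad $k$-subset of its leaf set and an ordering of it, so this costs only a factor $\binom{d}{k}k!$. Since there are at most $|E|^{k}$ such tuples, it suffices to prove $M(b_1,\dots,b_k)\le C_d\,q^{(d-1)(d-k)}$ for each fixed tuple.

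So fix $B=\{b_1,\dots,b_k\}$; we may assume the $b_i$ are distinct and $\mathcal Q(B)\neq\emptyset$. In any star $\mathcal S=(y,b_1,\dots,b_k,x_{k+1},\dots,x_d)$ in which $B$ is bad, (i) $y\in\mathcal Q(B)$, (ii) $x_\ell\cdot y=t$ (i.e.\ $y\in H_{x_\ell}$) for $\ell=k+1,\dots,d$, and (iii) $\mathcal Q(B)\subseteq\bigcup_{\ell=k+1}^{d}H_{x_\ell}$ by the definition of a bad set. The key structural observation is that $\mathcal Q(B)$ lies on the affine hyperplane $H_{b_1}=\{x:x\cdot b_1=t\}$, which misses the origin since $t\neq0$; hence, putting $s:=\dim\bigl(\operatorname{affspan}\mathcal Q(B)\bigr)$, one gets $\dim\bigl(\operatorname{span}\mathcal Q(B)\bigr)=s+1$ and $|\mathcal Q(B)|\le q^{s}$. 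I would fix once and for all a linearly independent $\{z_1,\dots,z_{s+1}\}\subseteq\mathcal Q(B)$ spanning $\operatorname{span}\mathcal Q(B)$.

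To bound $M(B)$ I would sum over: (a) the center $y\in\mathcal Q(B)$, at most $q^{s}$ choices; (b) a map $\jmath\colon\{1,\dots,s+1\}\to\{k+1,\dots,d\}$ with $z_i\in H_{x_{\jmath(i)}}$, which exists by (iii) and has at most $(d-k)^{s+1}$ values; (c) the leaves themselves, where $x_\ell$ must satisfy $x_\ell\cdot z=t$ for all $z\in S_\ell:=\{z_i:\jmath(i)=\ell\}$ together with $x_\ell\cdot y=t$. Writing $y=\sum_i c_iz_i$, its support is nonempty ($y\ne0$ as $y\in H_{b_1}$) and is contained in $\jmath^{-1}(\ell)$ for at most one $\ell$; for every other $\ell$ the set $S_\ell\cup\{y\}$ is linearly independent, and since $z_1,\dots,z_{s+1},y$ all lie on $H_{b_1}$ the relevant affine-linear systems are consistent, so $x_\ell$ is confined to an affine subspace of codimension $|S_\ell|+1$ (codimension $|S_\ell|$ for the at most one exceptional index). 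Adding up codimensions, the number of leaf-tuples for fixed $y$ and $\jmath$ is at most $q^{(d-1)(d-k)-s}$; multiplying by the $\le q^{s}$ choices of $y$ and the $\le(d-k)^{s+1}$ choices of $\jmath$ gives $M(B)\le(d-k)^{s+1}q^{(d-1)(d-k)}\le d^{d}q^{(d-1)(d-k)}$, as required.

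The step I expect to be the crux is (c): checking that $x_\ell\cdot y=t$ is linearly independent of the covering constraints $\{x_\ell\cdot z=t:z\in S_\ell\}$ for all but one $\ell$, so that the exponent stays exactly $(d-1)(d-k)$. The naive bounds $|\mathcal Q(B)|\le(d-k)q^{d-k-1}$ and ``each remaining leaf lies in $H_y$ and so has $\le q^{d-1}$ choices'' only give $q^{(d-1)(d-k)+1}$; the affine-hyperplane structure of $\mathcal Q(B)$ recovers the lost factor of $q$, being used once to pass from $s$ to $s+1$ and once for consistency of the systems. (Lemma~\ref{sizelemma} encodes essentially the same phenomenon as a dichotomy on $|\mathcal Q(B)|$, and could instead be used to split the argument into a ``small'' and a ``large'' regime.) The remaining ingredients --- the reduction to a fixed $B$, the sum over $\jmath$, and the codimension bookkeeping --- are routine; note that the argument does not actually use linear independence of the leaf set, so it bounds the number of all such stars, not just those with independent leaf set.
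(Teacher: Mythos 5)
Your argument is correct and takes essentially the same route as the paper's proof: both hinge on the facts that $\mathcal Q(B)$ sits inside an affine hyperplane missing the origin (so its linear span has dimension one more than the exponent bounding $|\mathcal Q(B)|$) and that badness forces the leaves outside $B$ to cover a linearly independent subset of $\mathcal Q(B)$, followed by the same codimension bookkeeping that yields $q^{(d-1)(d-k)}$ per choice of $B$. The only differences are cosmetic: you replace Lemma~\ref{sizelemma} with a direct affine-span argument and, rather than building $y$ into the independent set, you expand $y$ in the basis $\{z_i\}$ and note that the $H_y$ constraint can be absorbed for at most one leaf --- a trade that loses and gains exactly one factor of $q$ in two places that cancel.
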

\begin{proof}
We fix a linearly independent set $B=\{x_1,...,x_k\}\subseteq E$, $1\leq k \leq d-1$, and count the ways to extend this to a $d$-star $(y,x_1,...,x_d)$ for which $B$ is a bad set and $\{x_1,...,x_d\}$ is linearly independent.  Note that permuting the elements of $\{x_1,...,x_d\}$ does not change any of this data, so up to a constant depending only on $d$, this is the only case we need to consider.  We assume $B$ is bad in at least one $d$-star, $\mathcal{S}_0=(y^0,x_1,...,x_k,x_{k+1}^0,...,x_d^0)$, with $\{x_1,...,x_k,x_{k+1}^0,...,x_d^0\}$ linearly independent, since otherwise the count is zero.  To extend to a different $d$-star $\mathcal{S}=(y,x_1,...,x_k,x_{k+1},...,x_d)$, $y$ must be chosen from the set $\mathcal{Q}(B)$.  Let $\l$ be the smallest positive integer satisfying 
$$
|\mathcal{Q}(B)|\leq q^{\l},
$$
so that there are at most $q^{\l}$ choices for $y\in \mathcal{Q}(B)$.  Given such a choice, we count the number of ways to extend the leaf set to obtain a valid star $\mathcal{S}$.  Since 
$$
|\mathcal{Q}(B)|>q^{\l-1},
$$
Lemma \ref{sizelemma} tells us that there exists a subset $J\subseteq \mathcal{Q}(B)$ with $\l$ points, not containing $y$, such that $\{y\}\cup J$ is linearly independent.  For $x\in E\sm B$, let
$$
\Phi_x=J\cap \{z\in E: x\cdot z=t\}.
$$
Suppose that the leaf set of $\mathcal{S}$ is $L=A\cup B$, so that
$$
A=\{x_{k+1},...,x_d\}.
$$
If $B$ is bad in $\mathcal{S}$, then
$$
\bigcup_{i=k+1}^d\Phi_{x_i}=J.
$$
Given some set $Z\subseteq J$, for any $x\in E$ satisfying $\Phi_x=Z$, we see that $x$ lies on the hyperplane $H_z:=\{x: x\cdot z=t\}$ for each $z\in Z$.  Since we already fixed the point $y$ in the star $\mathcal{S}=(y,x_1,...,x_d)$, $x$ also lies on $H_y$.  Since $\{y\}\cup Z$ is linearly independent, this means there are at most $q^{d-1-|Z|}$ choices for $x\in E$ satisfying $\Phi_x=Z$.  Therefore, summing over all possible collections of $d-k$ subsets of $J$ whose union is $J$, we find that the number of stars $\mathcal{S}$ containing $B$ in the leaf set is at most
\begin{align*}
q^{\l}\sum_{\substack{(Z_1,...,Z_{d-k} )\\ \bigcup{Z_i}=J}}\prod_{i=1}^{d-k}q^{d-1-|Z_i|}
&=q^{(d-1)(d-k)+\l}\sum_{\substack{(Z_1,...,Z_{d-k} )\\ \bigcup{Z_i}=J}}\prod_{i=1}^{d-k}q^{-|Z_i|}
\\
&=q^{d^2-kd-d+k+\l}\sum_{\substack{(Z_1,...,Z_{d-k} )\\ \bigcup{Z_i}=J}}q^{-\sum_{i=1}^{d-k}|Z_i|}
\\
&\leq q^{d^2-kd-d+k+\l}\sum_{\substack{(Z_1,...,Z_{d-k} )\\ \bigcup{Z_i}=J}}q^{-\l}
\\
&\leq C_d'q^{d^2-kd-d+k},
\end{align*}
where $C_d'$ is the number of ways to write $J=\bigcup_{i=1}^{d-k}Z_i$.  $C_d'$ depends only on $d$, since $|J|=\l<d$.  
\end{proof}

We are now ready to prove Theorem \ref{main}.

\begin{proof}[Proof of Theorem \ref{main}]
For each $k=1,...,d-1$, let $M_k(E)$ denote the number of $d$-stars in $\mathcal{G}_t(E)$ admitting a bad set of size $k$, and let $M(E)$ denote the total number of $d$-stars admitting a bad set of any size.  If we can show that $M(E)<\frac{|E|^{d+1}}{3q^d}$, then it follows from Lemma \ref{independent} that there exists some $d$-star in $\mathcal{G}_t(E)$ which admits no bad set, and hence the VC-dimension of $\mathcal{H}_t(E)$ is equal to $d$.  Using Lemma \ref{bad}, we see that
$$
M(E)\leq\sum_{k=1}^{d-1}M_k(E)\leq C_d'\sum_{k=1}^{d-1}|E|^kq^{d^2-kd-d+k}
\leq (d-1)C_d'|E|^{d-1}q^{d-1}.
$$
The last step follows from the assumption that $|E|\geq q^{d-1}$, meaning that the summand is largest when $k$ is largest.

Therefore, $M(E)<\frac{|E|^{d+1}}{3q^d}$ whenever
$$
|E|\geq C_dq^{d-\frac{1}{2}}.
$$
We already needed the stronger restriction $|E|\geq q^{d-\frac{1}{d-1}}$ to apply Lemma \ref{independent}, and this completes the proof.
\end{proof}

\section{Future work}
One possible direction of future work is to resolve the problem of whether there exists $\alpha<d$ such that whenever $|E|\geq C_dq^{\alpha}$ for some constant $C_d$ independent of $q$, 
$$
\text{VCdim}(\mathcal{H}_t^{dist}(E))=d+1.
$$ This problem is still open for $d>2$, and the techniques used in this paper do not directly apply in that setting. The difficulty comes from the fact that we would need to solve the same graph embedding problem in a lower dimensional space, and generally these problems are easier in higher dimensions. 

Another direction for future research is to consider other classifiers and see whether these techniques or others apply to computing the VC-dimension of those classifiers restricted to large subsets of $\mathbb F_q^d$.

\newpage

\end{document}